\newcolumntype{M}[1]{>{\centering\arraybackslash}m{#1}} 
\newcommand\bp{{\bar\partial}}
\theoremstyle{plain}
\newtheorem{thm}{Theorem}[section]
\newtheorem{lemma}[thm]{Lemma}
\newtheorem{prop}[thm]{Proposition}
\newtheorem{cor}[thm]{Corollary}
\newtheorem{defn}[thm]{Definition}
\theoremstyle{definition}
\newtheorem{example}[thm]{Example}
\newtheorem{remark}[thm]{Remark}
\newcommand{\btheorem}{\begin{thm}}
    \newcommand{\etheorem}{\end{thm}}
\newcommand{\bproposition}{\begin{prop}}
    \newcommand{\eproposition}{\end{prop}}
\newcommand{\bdefinition}{\begin{defn}}
    \newcommand{\edefinition}{\end{defn}}
\newcommand{\bcorollary}{\begin{cor}}
    \newcommand{\ecorollary}{\end{cor}}
\newcommand{\bproof}{\begin{proof}}
    \newcommand{\eproof}{\end{proof}}
\newcommand{\bremark}{\begin{remark}}
    \newcommand{\eremark}{\end{remark}}
\newcommand{\eexample}{\end{example}}
\newcommand{\bexample}{\begin{example}}
\newcommand{\elemma}{\end{lemma}}
\newcommand{\blemma}{\begin{lemma}}
\newcommand{\la}{\langle}
\newcommand{\ra}{\rangle}
\newcommand{\sq}{\sqrt{-1}}
\newcommand{\p}{\partial}
\renewcommand{\bar}{\overline}
\newcommand{\eps}{\varepsilon}
\renewcommand{\phi}{\varphi}
\newcommand{\beq}{\begin{equation}}
\newcommand{\eeq}{\end{equation}}
\newcommand{\ee}{\end{eqnarray*}}
\newcommand{\be}{\begin{eqnarray*}}
\newcommand{\bd}{\begin{enumerate}}
    \newcommand{\ed}{\end{enumerate}}
\renewcommand{\hat}{\widehat}
\renewcommand{\tilde}{\widetilde}
\newcommand{\qtq}[1]{\quad\mbox{#1}\quad}
\renewcommand{\bp}{\bar{\partial}}
\newcommand{\Om}{\Omega}
\newcommand{\ts}{\otimes}
\renewcommand{\S}{{\mathbb S}}
\renewcommand{\>}{\rightarrow}
\newcommand{\C}{{\mathbb C}}
\newcommand{\R}{{\mathbb R}}
\newcommand{\diam}{\mathrm{diam}}
\newcommand{\LL}{\left\langle}
\newcommand{\RL}{\right\rangle}
\newcommand{\MS}{\mathrm{SB}}
\newcommand{\MLC}{\mathrm{LC}}
\newcommand{\Ric}{\mathrm{Ric}}
\newcommand{\bpzi}{\frac{\p}{\p \bar z^i}}
\newcommand{\bpzl}{\frac{\p}{\p \bar z^\ell}}
\newcommand{\pzi}{\frac{\p}{\p z^i}}
\newcommand{\pzl}{\frac{\p}{\p z^\ell}}
\newcommand{\pxi}{\frac{\p}{\p x^i}}
\newcommand{\pxl}{\frac{\p}{\p x^\ell}}
\newcommand{\sn}{\mathrm{sn}}
\renewcommand{\>}{\rightarrow}
\renewcommand{\p}{{\partial}}
\renewcommand{\bp}{{\bar{\partial}}}
\setlist[itemize]{leftmargin=*}
\setlist[enumerate]{leftmargin=*}
\numberwithin{equation}{section} 
\title{Comparison theorems in Hermitian geometry I}
\author{Xiaokui Yang}
\address{Xiaokui Yang, Department of Mathematics and Yau Mathematical Sciences Center, Tsinghua University, Beijing, 100084, China}
\email{xkyang@mail.tsinghua.edu.cn}
\begin{document}

    \begin{abstract}This paper develops  second variational formulas and index forms in the context of Hermitian geometry. 
    Building upon these analytical foundations, we establish results analogous to classical theorems in Riemannian geometry, including Myers' theorem,  Laplacian comparison theorems and volume comparison theorems.
   \end{abstract}

   \maketitle

   \section{Introduction}

   Let $(M, g, J)$ be a Hermitian manifold, where $g$ is a Riemannian metric and $J \in \mathrm{End}(T_\mathbb{R} M)$ is an integrable complex structure compatible with $g$. In the study of Hermitian geometry, several canonical metric compatible affine connections play fundamental roles.  The most prominent connections include: \bd \item The Chern connection $\nabla^{\mathrm{Ch}}$: The unique connection preserving both the Hermitian metric  and the holomorphic structure of $M$.

    \item The Strominger-Bismut connection $\nabla^{\mathrm{SB}}$(\cite{Str86, Bis89}): A geometrically natural connection that emerges in heterotic string theory and plays a crucial role in the study of complex non-K\"ahler geometry.

    \item The restricted Levi-Civita connection $\hat\nabla^{\mathrm{LC}}$: The projection of the complexified Levi-Civita connection $\nabla^{\mathrm{LC}}$ to the holomorphic tangent bundle $T^{1,0}M$. \ed
   Remarkably, when $(M, g, J)$ is K\"ahler, these connections coincide, reflecting the special geometric properties of K\"ahler manifolds. Recent research has significantly advanced our understanding of these connections, particularly in applications to string theory and the study of non-K\"ahler Calabi-Yau manifolds. For a more comprehensive treatment of related topics, we refer  readers to  the works of
    \cite{AD99, IP01, FG04, LY05, Tos07, FY08, FTY09, Fu10,  Li10,  LY12, Zha12,ACS15,
    FY15, AU16, LY16,WYZ16, Yang16, LU17,  OUV17, Yu18, AOUV18, CZ18,  HLY18, NZ18, YZ18a, YZ18b,  FZ19, CCN19, FZ19, Yang19, YZZ19, ZZ19a, Yang20, NZ23, WY25} and the references therein.\\

    In this paper, we establish new extensions of fundamental results from Riemannian geometric analysis to the framework of Hermitian geometry, with particular emphasis on the Strominger-Bismut connection. Our work bridges important gaps between these two geometric theories while revealing new phenomena  to the Hermitian setting.  Let $(M,g, J)$ be a Hermitian manifold and let $\omega_g$ denote the fundamental
    $2$-form associated with the Hermitian structure $(g, J)$:
    $$ \omega_g(X,Y)=g(JX,Y)$$
    for $X, Y\in T_\R M$.  It is well-known that the Strominger-Bismut connection $\nabla^{\mathrm{SB}}$ on the real tangent bundle $T_{\mathbb{R}}M$ of a Hermitian manifold $(M,g,J)$ is characterized by the following relation:
     \beq
    g(\nabla_X^{\MS}Y,Z)=g(\nabla_X^{\MLC}Y,Z)+\frac{1}{2}\left(d\omega_g\right)(JX,JY,JZ), \label{defnSB}
    \eeq
 where $\nabla^{\MLC}$ is the Levi-Civita connection of $(M,g)$.  A key feature of this connection is that its torsion tensor   \beq  T^{\MS}(X,Y,Z):=g(\nabla^\MS_XY-\nabla^\MS_YX-[X,Y],Z)\label{torsion}\eeq
   is totally skew symmetric, i.e. $T^\MS \in \Om^3(M)$. This property makes it particularly significant in complex non-K\"ahler geometry and theoretical physics. Moreover, since $T^\MS$ is totally skew-symmetric,  for any $X\in\Gamma(M,TM)$,  one can seee clearly that
   \beq
   \nabla^{\text{SB}}_XX=\nabla^{\text{LC}}_XX.
   \eeq
   In particular, if $\gamma$ is a geodesic  with respect to the Levi-Civita connection $\nabla^{\text{LC}}$, then it is also a geodesic with respect to the Strominger-Bismut connection $\nabla^{\mathrm{SB}}$. Consequently, we will not distinguish between these connections when discussing geodesics in subsequent analysis.

   The first main result establishes the second variation formula for the energy functional of unit speed geodesics in a Hermitian manifold $(M, g, J)$. This extends classical Riemannian variational principles to the complex-geometric setting, where the Strominger-Bismut connection  $\nabla^{\MS}$
   plays a crucial role in characterizing the torsion-modified curvature effects.

    \begin{thm}\label{thm-2nd}
    Let $(M,\omega_g)$ be a Hermitian mainfold and $\gamma(t):[a,b]\to M$ be a unit speed geodesic. Then the second variation of the energy of $\gamma$ is
    \begin{eqnarray}
    \frac{\p^2  E(\alpha)}{\p s_1\p s_2}\bigg|_{s_1=s_2=0}
    \nonumber&=&\int_a^b\left\{\LL\hat{\nabla}^{\MS}_{\frac{d}{d t}}V,\hat{\nabla}^{\MS}_{\frac{d}{d t}}W\RL+T^{\MS}\left(V,\gamma',\hat{\nabla}^{\MS}_{\frac{d}{d t}}W\right) dt-R^{\MS}(V,\gamma',\gamma',W)\right\}dt\\
    &&+\left.\LL\left.\left(\bar{\nabla}^\MS_{\frac{\p}{\p s_1}}\alpha_*\left(\frac{\p}{\p s_2}\right)\right)\right|_{s_1=s_2=0},\gamma'\RL\right|_{t=a}^{t=b},\label{2nd}
    \end{eqnarray}
    where  $V$ and $W$ are variational vector fields of $\alpha$, with
    $\hat{\nabla}^{\mathrm{SB}}$ and $\bar{\nabla}^{\mathrm{SB}}$ representing the connections
    induced on  $\gamma^*TM$ and $\alpha^*TM$ respectively via the
    Strominger-Bismut connection $\nabla^{\mathrm{SB}}$ on $(TM, g, J)$. Moreover,  if $\alpha$ is a proper variation of $\gamma$, then \beq
    \frac{\p^2  E(\alpha)}{\p s_1\p s_2}\bigg|_{s_1=s_2=0}=\int_a^b\left\{\LL\hat{\nabla}^{\MS}_{\frac{d}{d t}}V,\hat{\nabla}^{\MS}_{\frac{d}{d t}}W\RL+T^{\MS}\left(V,\gamma',\hat{\nabla}^{\MS}_{\frac{d}{d t}}W\right)-R^{\MS}(V,\gamma',\gamma',W)\right\}dt. \label{normalsecondvariation}
    \eeq
   \end{thm}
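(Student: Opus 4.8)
The plan is to compute the mixed second derivative of the energy $E(\alpha)=\frac12\int_a^b\LL\alpha_*\big(\tfrac{\p}{\p t}\big),\alpha_*\big(\tfrac{\p}{\p t}\big)\RL\,dt$ directly, exploiting two structural features of $\nabla^{\MS}$: it is compatible with $g$, and its torsion $T^{\MS}$ is a $3$-form. Metric compatibility follows from \eqref{defnSB}, since the correction term $\tfrac12 d\omega_g(JX,JY,JZ)$ is skew under interchanging its last two arguments, and $\nabla^{\MLC}$ is already metric; hence $\LL\nabla^{\MS}_XY,Z\RL+\LL\nabla^{\MS}_XZ,Y\RL=X\LL Y,Z\RL$. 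Because the coordinate fields $\frac{\p}{\p s_1},\frac{\p}{\p s_2},\frac{\p}{\p t}$ on the parameter domain commute, the definition \eqref{torsion} of the torsion yields the modified symmetry lemma
$$\bar\nabla^{\MS}_{\frac{\p}{\p s}}\alpha_*\Big(\frac{\p}{\p t}\Big)=\bar\nabla^{\MS}_{\frac{\p}{\p t}}\alpha_*\Big(\frac{\p}{\p s}\Big)+T\Big(\alpha_*\big(\tfrac{\p}{\p s}\big),\alpha_*\big(\tfrac{\p}{\p t}\big)\Big),$$
which replaces the torsion-free identity and is the source of every torsion contribution below.

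First I would compute the first variation. Differentiating $E$ in $s_2$ and using metric compatibility gives $\frac{\p E}{\p s_2}=\int_a^b\LL\bar\nabla^{\MS}_{\frac{\p}{\p s_2}}\alpha_*\big(\tfrac{\p}{\p t}\big),\alpha_*\big(\tfrac{\p}{\p t}\big)\RL\,dt$. Applying the symmetry lemma and observing that the resulting torsion term $T^{\MS}\big(\alpha_*\tfrac{\p}{\p s_2},\alpha_*\tfrac{\p}{\p t},\alpha_*\tfrac{\p}{\p t}\big)$ vanishes by skew-symmetry (two equal slots), this reduces to $\int_a^b\LL\bar\nabla^{\MS}_{\frac{\p}{\p t}}\alpha_*\big(\tfrac{\p}{\p s_2}\big),\alpha_*\big(\tfrac{\p}{\p t}\big)\RL\,dt$, exactly as in the Riemannian case. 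Next I would differentiate once more in $s_1$; metric compatibility splits the integrand into two terms. In the term $\LL\bar\nabla^{\MS}_{\frac{\p}{\p s_1}}\bar\nabla^{\MS}_{\frac{\p}{\p t}}\alpha_*\big(\tfrac{\p}{\p s_2}\big),\alpha_*\big(\tfrac{\p}{\p t}\big)\RL$ I would commute the two covariant derivatives through the curvature (the bracket term again dropping since the parameter fields commute), producing a factor $\LL R^{\MS}\big(\alpha_*\tfrac{\p}{\p s_1},\alpha_*\tfrac{\p}{\p t}\big)\alpha_*\big(\tfrac{\p}{\p s_2}\big),\alpha_*\big(\tfrac{\p}{\p t}\big)\RL$ plus $\LL\bar\nabla^{\MS}_{\frac{\p}{\p t}}\bar\nabla^{\MS}_{\frac{\p}{\p s_1}}\alpha_*\big(\tfrac{\p}{\p s_2}\big),\alpha_*\big(\tfrac{\p}{\p t}\big)\RL$. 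Integrating the latter by parts produces the boundary term $\LL\bar\nabla^{\MS}_{\frac{\p}{\p s_1}}\alpha_*\big(\tfrac{\p}{\p s_2}\big),\gamma'\RL\big|_a^b$ together with an integral against $\bar\nabla^{\MS}_{\frac{\p}{\p t}}\alpha_*\big(\tfrac{\p}{\p t}\big)$, which vanishes at $s_1=s_2=0$ because $\gamma$ is a geodesic.

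The remaining term $\LL\bar\nabla^{\MS}_{\frac{\p}{\p t}}\alpha_*\big(\tfrac{\p}{\p s_2}\big),\bar\nabla^{\MS}_{\frac{\p}{\p s_1}}\alpha_*\big(\tfrac{\p}{\p t}\big)\RL$ is where I would apply the symmetry lemma to the second factor; this yields the kinetic term $\LL\hat\nabla^{\MS}_{\frac{d}{dt}}V,\hat\nabla^{\MS}_{\frac{d}{dt}}W\RL$ together with the surviving torsion term $\LL\hat\nabla^{\MS}_{\frac{d}{dt}}W,T(V,\gamma')\RL=T^{\MS}\big(V,\gamma',\hat\nabla^{\MS}_{\frac{d}{dt}}W\big)$. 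Finally, evaluating at $s_1=s_2=0$ and using that the curvature of a metric connection is skew in its last two slots, so that $\LL R^{\MS}(V,\gamma')W,\gamma'\RL=-R^{\MS}(V,\gamma',\gamma',W)$, assembles \eqref{2nd}. For a proper variation the endpoints are fixed, so $\alpha_*\big(\tfrac{\p}{\p s_2}\big)$ vanishes identically at $t=a,b$ together with its $s_1$-covariant derivative, which kills the boundary term and produces \eqref{normalsecondvariation}.

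The main obstacle is the careful bookkeeping of the torsion: one must track precisely which instance of the symmetry lemma produces a vanishing torsion term (the one in the first variation, where $\alpha_*\tfrac{\p}{\p t}$ occupies two slots of the $3$-form) and which produces the survivor, and choose the order of application so that the survivor appears in the stated form $T^{\MS}\big(V,\gamma',\hat\nabla^{\MS}_{\frac{d}{dt}}W\big)$ rather than a permuted or oppositely-signed variant. Pinning down the sign of the curvature term likewise requires combining the skew-symmetry of $R^{\MS}$ with the fixed curvature-sign convention, which is the only other place an error could creep in.
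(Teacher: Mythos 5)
Your proposal is correct and follows essentially the same route as the paper: both compute $\frac{\p^2}{\p s_1\p s_2}\frac{1}{2}\big|\alpha_*\big(\tfrac{\p}{\p t}\big)\big|^2$ using metric compatibility of $\nabla^{\MS}$, the torsion-modified symmetry identity $\bar\nabla^{\MS}_{\frac{\p}{\p s}}\alpha_*\big(\tfrac{\p}{\p t}\big)=\bar\nabla^{\MS}_{\frac{\p}{\p t}}\alpha_*\big(\tfrac{\p}{\p s}\big)+T\big(\alpha_*\big(\tfrac{\p}{\p s}\big),\alpha_*\big(\tfrac{\p}{\p t}\big)\big)$ (with the torsion dropping in the first variation by skew-symmetry and surviving exactly once as $T^{\MS}\big(V,\gamma',\hat\nabla^{\MS}_{\frac{d}{dt}}W\big)$), the curvature commutation of $\bar\nabla^{\MS}_{\frac{\p}{\p s_1}}$ and $\bar\nabla^{\MS}_{\frac{\p}{\p t}}$, the geodesic equation to kill the $\bar\nabla^{\MS}_{\frac{\p}{\p t}}\alpha_*\big(\tfrac{\p}{\p t}\big)$ term, and the skew-symmetry \eqref{skewsymmetry} to fix the sign of the curvature term. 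The treatment of the boundary term and of proper variations also matches the paper's argument.
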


\noindent
While the energy functional $E(\alpha)$  and its second variation  $\frac{\p^2  E(\alpha)}{\p s_1\p s_2}$ are intrinsically determined by the Riemannian metric
$g$, the explicit form of the second variation formula depends crucially on the choice of connection. In Hermitian geometry, this leads to multiple meaningful expressions:
\bd
    \item The Levi-Civita connection yields the classical Riemannian formulation;
    \item The Chern connection provides a natural complex-geometric perspective;
    \item The Strominger-Bismut connection reveals torsion effects in the variational structure.
\ed
Our approach utilizes the Strominger-Bismut connection to highlight the interplay between metric variations and the Hermitian torsion. In Riemannian geometry, the index form $I_\gamma: \Gamma(\gamma^*TM)\times \Gamma(\gamma^*TM)\>\R$ is classically defined using the Levi-Civita connection  $\nabla^\MLC$:
\beq I_\gamma(V,W) =\int_a^b\left\{\LL\hat{\nabla}^{\text{LC}}_{\frac{d}{dt}}V,\hat{\nabla}^{\text{LC}}_{\frac{d}{dt}}W\RL-R^{\text{LC}}(V,\gamma',\gamma',W)\right\}dt. \eeq
We   reformulate it by using  the Strominger-Bismut connection $\nabla^\MS$.

    \btheorem\label{thm-index form}     Let $(M,\omega_g)$ be a Hermitian mainfold and $\gamma(t):[a,b]\to M$ be a unit speed geodesic. Then for any smooth vector fields $V$ and $W$ along $\gamma$, the index form can be written as
   \be I_\gamma(V,W)&=&\int_a^b\left\{\LL\hat{\nabla}^{\MS}_{\frac{d}{d t}}V,\hat{\nabla}^{\MS}_{\frac{d}{d t}}W\RL+T^{\MS}\left(V,\gamma',\hat{\nabla}^{\MS}_{\frac{d}{d t}}W\right)-R^{\MS}(V,\gamma',\gamma',W)\right\}dt\\
   &&+\left.\frac{1}{2}T^\MS(V,W,\gamma')\right|_{t=a}^{t=b}.\ee
   In particular,
   \beq I_\gamma(V,V)=\int_a^b\left\{\LL\hat{\nabla}^{\MS}_{\frac{d}{d t}}V,\hat{\nabla}^{\MS}_{\frac{d}{d t}}V\RL+T^{\MS}\left(V,\gamma',\hat{\nabla}^{\MS}_{\frac{d}{d t}}V\right)-R^{\MS}(V,\gamma',\gamma',V)\right\}dt.\eeq
   \etheorem

   \noindent The index form constitutes a fundamental analytical tool in geometric variational theory, with particularly profound applications in comparison theorems for both Riemannian and K\"ahler manifolds. As applications of Theorem \ref{thm-2nd} and Theorem \ref{thm-index form}, we derive results analogous to classical theorems in Riemannian geometry, including Myers’ theorem and various comparison theorems. Recall that
   the \emph{real Ricci curvature of the Strominger-Bismut connection} is defined to be
   \beq
   \Ric^{\MS}(X,Y):=\sum_{i,\ell=1}^{2n}g^{i\ell} R^{\MS}\left(\frac{\p}{\p x^i},X,Y,\frac{\p}{\p x^\ell}\right), \label{realSB}
   \eeq
   where  $X,Y\in T_\R M$.
    It is well-known that the first Bianchi identity fails to hold for  $R^\MS$, resulting in  $ \Ric^{\MS}$
    not necessarily being symmetric.  It is well-known that  Ricci curvature tensors derived from the Levi-Civita connection and the Strominger-Bismut connection possess intrinsically different geometric properties, making them generally incomparable. For detailed discussions on the comparative analysis of these and other Ricci curvatures, we refer to \cite{LY12}, \cite{LY16}, and \cite{WY25}.
     In the Hermitian setting,  the holomorphic Ricci curvature serves as the complex analogue of the real Ricci curvature and is defined by  \beq   \mathfrak{Ric}^\MS(V,\bar V):=\sum_{i,\ell=1}^nh^{i\bar\ell}R^\MS\left(\frac{\p}{\p z^i},
    \bar V, V, \frac{\p}{\p {{\bar z}^\ell}}\right)\eeq 
    where $V\in\Gamma(M,T^{1,0}M)$.  The following theorem is a Hermitian counterpart to Myers' theorem by using the holomorphic  Ricci curvature of the Strominger-Bismut connection $\nabla^\MS$.

    \btheorem\label{Myers}  Let $(M,\omega)$ be a complete balanced Hermitian manifold of complex dimension $n$. Suppose that there exists some constant $K>0$ such that the holomorphic Ricci curvature of the Strominger-Bismut connection satisfies
 \beq \mathfrak{Ric}^{\MS} (V,\bar V)\geq (2n-1)K |V|^2, \label{complexcurvature} \eeq 
 for each $V\in\Gamma(M,T^{1,0}M)$. \bd \item $M$ is compact and 
   $$
   \diam(M, \omega)\leq \frac{\pi}{\sqrt{K}}.
   $$
   \item The fundamental group of $M$ is finite.
  \item The volume comparison theorem holds:
 \beq  \mathrm{Vol}(M,\omega)\leq \mathrm{Vol}\left(\mathbb S^{2n}\left(1/\sqrt{K}\right), g_{\mathrm{can}}\right).\eeq 
   \ed 
   \etheorem
\noindent The proof of Theorem \ref{Myers} demonstrates that both the Laplacian comparison theorem and the local volume comparison theorem remain valid under the curvature condition \eqref{complexcurvature}. In the K\"ahler case ($\omega$ K\"ahler), Theorem \ref{Myers} is classical, and the manifold $M$ is known to be simply connected. However, when 
$\omega$ is merely balanced, the question of whether $M$ retains simple connectivity remains open. On the other hand,   using perturbation methods  (e.g. \cite{LY16} and \cite{WY25}), one can easily construct  non-K\"ahler Hermitian  metrics satisfying $\mathfrak{Ric}^{\MS}>0$ while their Levi-Civita Ricci curvature need not be positive.‌\\

   Holomorphic sectional curvature is also a fundamental concept in complex geometry that  carries rich geometric information, sharing important relationships with both Ricci curvature and holomorphic bisectional curvature   of complex manifolds. The holomorphic sectional curvature $\mathrm{HSC}^{\MS}$ of the Strominger-Bismut connection $\nabla^\MS$ is defined as
  \beq
  \mathrm{HSC}^{\MS}(X):=R^{\MS}(JX,X,X,JX)/|X|^4. \eeq
 The following result demonstrates a structural analogy to the foundational theorems established by Sygne and Tsukamoto (\cite{Tsu57}), extending their framework within the Hermitian geometric setting.
   \begin{thm}\label{thm-compactness from HSC}
    Let $(M,\omega)$ be a complete Hermitian manifold. Suppose that the holomorphic sectional curvature  of the Strominger-Bismut connection $\nabla^\MS$ satisfies $$  \mathrm{HSC}^{\MS}\geq K$$  for some constant  $K>0$.  Then  $M$ is compact and
    $$
    \diam(M,\omega)\leq \frac{\pi}{\sqrt{K}}.
    $$
Moreover,  $M$ is simply connected.
   \end{thm}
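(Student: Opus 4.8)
The plan is to feed a single distinguished variation field into the Strominger--Bismut index form of Theorem~\ref{thm-index form}, namely $V=J\gamma'$ (rescaled when convenient), and to exploit two structural features of $\nabla^{\MS}$: it is a Hermitian connection, so $\nabla^{\MS}J=0$, and its torsion $T^{\MS}\in\Omega^3(M)$ is totally skew. Since $\gamma$ is a geodesic, $\hat\nabla^{\MS}_{d/dt}\gamma'=0$, so $J\gamma'$ is parallel along $\gamma$:
\[
\hat\nabla^{\MS}_{d/dt}(J\gamma')=J\,\hat\nabla^{\MS}_{d/dt}\gamma'=0 .
\]
Moreover $g(J\gamma',\gamma')=\omega_g(\gamma',\gamma')=0$, so $V=J\gamma'$ is a genuine normal field with $|J\gamma'|=|\gamma'|=1$. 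The decisive point is that, for fields proportional to $J\gamma'$, the curvature term of the index form is exactly the holomorphic sectional curvature: $R^{\MS}(J\gamma',\gamma',\gamma',J\gamma')=\mathrm{HSC}^{\MS}(\gamma')$.

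For the diameter bound, let $\gamma\colon[0,L]\to M$ be any minimizing unit-speed geodesic and put $V(t)=\sin(\pi t/L)\,J\gamma'(t)$, a proper field. Then $\hat\nabla^{\MS}_{d/dt}V=(\pi/L)\cos(\pi t/L)\,J\gamma'$, so $\LL\hat\nabla^{\MS}_{d/dt}V,\hat\nabla^{\MS}_{d/dt}V\RL=(\pi/L)^2\cos^2(\pi t/L)$. The torsion term vanishes identically, since $T^{\MS}(V,\gamma',\hat\nabla^{\MS}_{d/dt}V)$ is a scalar multiple of $T^{\MS}(J\gamma',\gamma',J\gamma')=0$ (a repeated entry in the alternating form $T^{\MS}$). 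By Theorem~\ref{thm-index form} and the hypothesis $\mathrm{HSC}^{\MS}\ge K$,
\[
I_\gamma(V,V)=\int_0^L\Big\{\tfrac{\pi^2}{L^2}\cos^2\tfrac{\pi t}{L}-\sin^2\tfrac{\pi t}{L}\,\mathrm{HSC}^{\MS}(\gamma')\Big\}\,dt\le\tfrac{1}{2L}\big(\pi^2-KL^2\big).
\]
If $L>\pi/\sqrt{K}$ this is strictly negative; as $V$ is proper, $I_\gamma(V,V)=\frac{\p^2 E}{\p s^2}\big|_{s=0}<0$, so $\gamma$ is not minimizing, a contradiction. Hence every minimizing geodesic has length at most $\pi/\sqrt{K}$, giving $\diam(M,\omega)\le\pi/\sqrt{K}$. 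As $(M,\omega)$ is complete with finite diameter, the Hopf--Rinow theorem forces $M$ to be compact.

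For simple connectivity I would run Synge's argument via the second variation formula~\eqref{2nd}. If $\pi_1(M)$ were nontrivial, pick a nontrivial free homotopy class; by compactness of $M$ it contains a shortest closed geodesic $\gamma\colon[0,L]\to M$, smooth and closed with $\gamma'(0)=\gamma'(L)$. Take the periodic field $V=J\gamma'$, which is $\nabla^{\MS}$-parallel along $\gamma$ and closes up because $J$ is globally defined. Then $\hat\nabla^{\MS}_{d/dt}V=0$, the torsion term again vanishes, and the boundary term in~\eqref{2nd} vanishes by periodicity of the variation. Therefore
\[
\frac{\p^2 E}{\p s^2}\Big|_{s=0}=-\int_0^L\mathrm{HSC}^{\MS}(\gamma')\,dt\le-KL<0 ,
\]
so the second variation of energy along $\gamma$ is strictly negative. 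This produces nearby closed curves in the same free homotopy class of strictly smaller energy, hence smaller length, contradicting the minimality of $\gamma$; thus $\pi_1(M)$ is trivial.

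The genuinely new difficulty relative to the K\"ahler case of Tsukamoto is the torsion term $T^{\MS}(V,\gamma',\hat\nabla^{\MS}_{d/dt}V)$, which is absent when $\omega$ is K\"ahler. The crux is that $V=J\gamma'$ is perfectly adapted to eliminate it: because $J\gamma'$ is $\nabla^{\MS}$-parallel along the geodesic, $\hat\nabla^{\MS}_{d/dt}V$ stays proportional to $J\gamma'$, so every torsion contribution has a repeated entry and vanishes by total skew-symmetry of $T^{\MS}$. This explains why the holomorphic sectional curvature hypothesis is the natural one here, and why---unlike the Ricci-type Theorem~\ref{Myers}---no balanced assumption is needed.
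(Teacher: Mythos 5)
Your proof is correct and follows essentially the same route as the paper: both parts hinge on the distinguished field $J\gamma'$ (scaled by $\sin(\pi t/L)$ for the diameter bound, taken parallel and periodic for the Synge-type argument), use $\nabla^{\MS}J=0$ and the total skew-symmetry of $T^{\MS}$ to kill the torsion term, and reduce the curvature term to $R^{\MS}(J\gamma',\gamma',\gamma',J\gamma')=\mathrm{HSC}^{\MS}(\gamma')\ge K$. The only cosmetic difference is that you invoke Theorem~\ref{thm-index form} for the diameter estimate where the paper applies Theorem~\ref{thm-2nd} directly; for proper variations these give the identical quantity.
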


\noindent In particular, we establish the following result, which parallels Weinstein's theorem on positive Riemannian sectional curvature and extends the K\"ahler setting in \cite{NZ18}.
   \btheorem\label{cor-auto}
    Let $(M,\omega_g)$ be a compact Hermitian manifold with positive holomorphic sectional curvature  $\mathrm{HSC}^{\MS}>0$. Then every isometry $F:(M,g)\>(M,g)$ has a fixed point.
   \etheorem

 \noindent\textbf{Acknowledgements}.  The author would like to Valentino Tosatti, Bo Yang, Kaijie Zhang and Fangyang Zheng  for their insightful discussions.   Special thanks go to  Zhitong Chen for  verifying many details in this paper.

 \vskip 2\baselineskip

   \section{Curvature notions}

\noindent   Let $(M,g,J)$ be a Hermitian manifold. There is a unique affine connection $$\nabla: \Gamma(M,T_\R M)\times  \Gamma(M,T_\R M)\> \Gamma(M,T_\R M) $$ satisfying
   \bd \item $\nabla$ is compatible with $g$ and $J$;
   \item The  torsion tensor $$ T(X,Y,Z):=g(\nabla_XY-\nabla_YX-[X,Y],Z)$$  is totally anti-symmetric, i.e. $T\in \Om^3(M)$.
   \ed
   This connection is called the \textbf{Strominger-Bismut connection} and it is denoted by $\nabla^{\MS}$. Let $\omega_g$ denote the fundamental
  $2$-form associated with the Hermitian structure:
   $$ \omega_g(X,Y)=g(JX,Y),\ \ X, Y\in T_\R M.$$

\noindent The following foundational formulation will be employed throughout our work:
   \blemma\label{definition of Strominger connection} The Strominger connection $\nabla^\MS$ on  $(M,g, J)$ is given by
   \beq
   g(\nabla_X^{\MS}Y,Z)=g(\nabla_X^{\MLC}Y,Z)+\frac{1}{2}\left(d\omega_g\right)(JX,JY,JZ),  \label{defnSB1}
   \eeq where $\nabla^{\MLC}$ is the Levi-Civita connection on $(M,g)$ and  the torsion of $\nabla^\MS$ is
   \beq
   T^\MS(X,Y,Z)=\left(d\omega_g\right)(JX,JY,JZ).\label{torsion1}
   \eeq
   \elemma
   \bproof  It follows from a straightforward computation. \eproof 

   \noindent The \emph{ curvature tensor} $R^\MS$ of the Strominger-Bismut connection $\nabla^\MS$ is defined as
   \beq
   R^{\MS}(X,Y,Z,W)=\LL\nabla^{\MS}_X\nabla^{\MS}_YZ-\nabla^{\MS}_Y\nabla^{\MS}_XZ-\nabla^{\MS}_{[X,Y]}Z,W\RL.
   \eeq

\noindent  The following lemma describes the essential curvature property of  Strominger-Bismut connection $\nabla^{\MS}$:   \begin{lemma}
    For any $X,Y,Z,W\in\Gamma(M,T_\C M)$, it holds that
    \beq
    R^{\MS}(X,Y,Z,W)=-R^{\MS}(Y,X,Z,W)=-R^{\MS}(X,Y,W,Z).\label{skewsymmetry}
    \eeq
   \end{lemma}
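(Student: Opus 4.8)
The plan is to prove the two antisymmetry relations separately. Both are purely formal consequences of the definition of $R^{\MS}$ together with the fact that $\nabla^{\MS}$ is compatible with $g$; neither uses the special totally skew-symmetric torsion structure of the Strominger-Bismut connection. I will write $\mathcal{R}(X,Y):=\nabla^{\MS}_X\nabla^{\MS}_Y-\nabla^{\MS}_Y\nabla^{\MS}_X-\nabla^{\MS}_{[X,Y]}$ for the curvature operator, so that $R^{\MS}(X,Y,Z,W)=\LL\mathcal{R}(X,Y)Z,W\RL$, with $\LL\cdot,\cdot\RL$ the $\mathbb{C}$-bilinear extension of $g$ to $T_\C M$.

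The antisymmetry in the first pair is immediate. Interchanging $X$ and $Y$ in $\mathcal{R}(X,Y)$ swaps the two second-order terms and replaces $\nabla^{\MS}_{[X,Y]}$ by $\nabla^{\MS}_{[Y,X]}=-\nabla^{\MS}_{[X,Y]}$, since $[Y,X]=-[X,Y]$. Hence $\mathcal{R}(Y,X)=-\mathcal{R}(X,Y)$ as an operator, and pairing with $W$ gives $R^{\MS}(Y,X,Z,W)=-R^{\MS}(X,Y,Z,W)$. This step requires no property of the connection beyond the definition.

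The antisymmetry in the second pair is where metric compatibility enters, and it is the step I regard as the substantive one. Since $\nabla^{\MS}$ preserves $g$, we have the Leibniz rule $X\LL Z,W\RL=\LL\nabla^{\MS}_XZ,W\RL+\LL Z,\nabla^{\MS}_XW\RL$. Applying this rule twice to expand $XY\LL Z,W\RL$ and $YX\LL Z,W\RL$, the mixed first-order terms $\LL\nabla^{\MS}_YZ,\nabla^{\MS}_XW\RL$ and $\LL\nabla^{\MS}_XZ,\nabla^{\MS}_YW\RL$ appear symmetrically and cancel upon subtraction, leaving
\[
[X,Y]\LL Z,W\RL=\LL(\nabla^{\MS}_X\nabla^{\MS}_Y-\nabla^{\MS}_Y\nabla^{\MS}_X)Z,W\RL+\LL Z,(\nabla^{\MS}_X\nabla^{\MS}_Y-\nabla^{\MS}_Y\nabla^{\MS}_X)W\RL.
\]
Applying the Leibniz rule once more to the left-hand side, $[X,Y]\LL Z,W\RL=\LL\nabla^{\MS}_{[X,Y]}Z,W\RL+\LL Z,\nabla^{\MS}_{[X,Y]}W\RL$, and subtracting yields exactly $\LL\mathcal{R}(X,Y)Z,W\RL+\LL Z,\mathcal{R}(X,Y)W\RL=0$, i.e. $R^{\MS}(X,Y,Z,W)=-R^{\MS}(X,Y,W,Z)$.

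The only point demanding care is the passage to $T_\C M$: one notes that $\nabla^{\MS}$ extends $\mathbb{C}$-linearly and $\LL\cdot,\cdot\RL$ extends $\mathbb{C}$-bilinearly, so the metric-compatibility Leibniz rule persists for complex arguments, and since both sides of the asserted identity are tensorial (that is, $C^\infty(M,\mathbb{C})$-multilinear in all four slots) it suffices to verify it on real vector fields and extend by multilinearity. I do not anticipate any genuine obstacle: this is the classical curvature symmetry of a metric connection, and the failure of the first Bianchi identity noted in the text is irrelevant here, since that concerns the pair-swap symmetry $R^{\MS}(X,Y,Z,W)=R^{\MS}(Z,W,X,Y)$ rather than the two antisymmetries in \eqref{skewsymmetry}.
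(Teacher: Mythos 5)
Your proposal is correct and follows essentially the same route as the paper: the first antisymmetry from the definition of the curvature operator, and the second from metric compatibility of $\nabla^{\MS}$. The only cosmetic difference is that the paper establishes $R^{\MS}(X,Y,Z,Z)=0$ and polarizes, while you expand $[X,Y]\LL Z,W\RL$ directly to get the bilinear identity $\LL\mathcal{R}(X,Y)Z,W\RL+\LL Z,\mathcal{R}(X,Y)W\RL=0$ --- the same computation organized slightly differently.
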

   \begin{proof} The first identity follows from the definition. The second identity can be derived from  the relation $    R^{\MS}(X,Y,Z,Z)=0$.
    Indeed,
    \be
    &&  R^{\MS}(X,Y,Z,Z)=\LL\nabla^{\MS}_X\nabla^{\MS}_YZ-\nabla^{\MS}_Y\nabla^{\MS}_XZ-\nabla^{\MS}_{[X,Y]}Z,Z\RL\\
    &=&X\LL\nabla^{\MS}_YZ,Z\RL-\LL\nabla^{\MS}_YZ,\nabla^{\MS}_X Z\RL-Y\LL\nabla^{\MS}_XZ,Z\RL+\LL\nabla^{\MS}_XZ,\nabla^{\MS}_Y Z\RL-\frac{1}{2}[X,Y](|Z|^2)\\
    &=&X\left(\frac{1}{2}Y(|Z|^2)\right)-Y\left(\frac{1}{2}X(|Z|^2)\right)-\frac{1}{2}[X,Y]|Z|^2=0.
    \ee
    The proof is completed.
   \end{proof}

\noindent The curvature tensor $R^{\MS}$
exhibits key deviations from  the symmetric properties of the Riemannian curvature tensor:
\bd \item   the first Bianchi identity fails to hold for $R^{\MS}$. 
\item    $R^{\MS}(X,Y,Z,W)\neq R^{\MS}(Z,W,X,Y)$ in general.
\ed

   \blemma
    For any $X\in\Gamma(M,T_\R M)$, it holds that
    \beq
    \nabla^{\MS}_XX=\nabla^{\MLC}_XX.
    \eeq
    In particular, if $\gamma$ is a geodesic on $M$, then $\hat{\nabla}^{\MS}_{\frac{d}{dt}}\gamma'=0$.
   \elemma

   \blemma
    Let $\gamma:[a,b]\to M$ be a smooth curve and
    \beq
    E(\gamma):=\frac{1}{2}\int_a^b|\gamma'(t)|^2dt
    \eeq
    be the energy functional. Then for any proper variation $\alpha:[a,b]\times(-\eps,\eps)\to M$ of $\gamma$ with variational vector field $V$  it holds that
    \beq
    \frac{d}{d s}\bigg|_{s=0}E(\alpha(\bullet,s))=-\int_a^b\LL V,\hat{\nabla}^\MS_{\frac{d}{dt}}\gamma'\RL dt.
    \eeq
   \elemma
   \begin{proof}
    It follows from the well-known result
    \beq
    \frac{d}{d s}\bigg|_{s=0}E(\alpha(\bullet,s))=-\int_a^b\LL V,\hat{\nabla}_{\frac{d}{dt}}^{\text{LC}}\gamma'\RL dt
    \eeq
    and the fact that $\hat{\nabla}^\MS_{\frac{d}{dt}}\gamma'=\hat{\nabla}^{\MLC}_{\frac{d}{dt}}\gamma'$.
   \end{proof}

   Let $(M,g,J)$ be a Hermitian manifold.
   Let $\gamma:[a,b]\to(M,g)$ be a smooth curve and $v\in T_{\gamma(a)}M$. Then there exists a unique vector field $V$,
   called the \emph{Strominger-Bismut parallel vector field along $\gamma$}, such that $\hat{\nabla}_{\frac{d}{dt}}^\text{SB}V\equiv 0$ along $\gamma$ and $V(a)=v$.
   For $t_0,t\in [a,b]$,  we define the map
   \beq
   P_{t_0,t;\gamma}^{\text{SB}}:T_{\gamma(t_0)}M\to T_{\gamma(t)}M
   \eeq
   by $P_{t_0,t;\gamma}^{\text{SB}}v_0=V(t)$ where $V$ is the unique Strominger-Bismut parallel vector field along $\gamma$ satisfying $V(t_0)=v_0$.
   The collection of maps $P_{t_0,t;\gamma}^{\text{SB}}$ for $t_0,t\in [a,b]$ is called a \emph{Strominger-Bismut parallel transport along $\gamma$}.\\

For readers' convenience, we recall  local computations on Hermitian manifolds.     Let $(M,J)$ be a complex manifold of complex dimension  $n$. By
Newlander-Nirenberg's theorem, there exists a real coordinate system
$\{x^i,x^I\}$ where
$$
\{x^i\} \qtq{for} 1\leq i\leq n;\ \ \ \ \  \{x^I\} \qtq{for} n+1\leq
I\leq 2n \qtq{and}\ \ \ I=i+n.$$
such that $z^i=x^i+\sq x^I$ are  local holomorphic
coordinates on $M$ and
$$
J\left(\frac{\p}{\p x^i}\right)=\frac{\p}{\p x^I} \qtq{and}
J\left(\frac{\p}{\p x^I}\right)=-\frac{\p}{\p x^i}.$$
For $1\leq i\leq n$, $ dz^i=dx^i+\sq dx^I$, $d\bar z^i:=dx^i-\sq dx^I$
and  $$ \frac{\p}{\p
	z^i}=\frac{1}{2}\left(\frac{\p}{\p x^i}-\sq \frac{\p}{\p
	x^I}\right),\ \ \ \frac{\p}{\p\bar
	z^i}=\frac{1}{2}\left(\frac{\p}{\p x^i}+\sq \frac{\p}{\p
	x^I}\right).$$
Let $(M,g, J)$ be a Hermitian manifold and  $T_\C M=T_\R M\ts \C$
be the complexification. One can extend  $g$ and $J$ to
$T_\C M$ in the $\C$-linear way. Hence for any $X,Y\in T_\C M$, we
still have $ g(JX,JY)=g(X,Y)$  and  the Riemannian metric $g$ is represented
by
$$ds_g^2=g_{i\ell}dx^i\ts dx^\ell+g_{iL}dx^i\ts dx^L+g_{I\ell} dx^I\ts dx^\ell+g_{IJ}dx^I\ts dx^J,$$
where the metric components $g_{i\ell}, g_{iL}, g_{I\ell}$ and
$g_{IL}$ are defined in the obvious way by using local  coordinates $\left\{\frac{\p}{\p x^i}, \frac{\p}{\p x^I}\right\}$. The symmetric $\C$-bilinear  form $g:T_\C M\times T_\C M\>\C$ has the property that $$ g\left(\frac{\p}{\p
	z^i},\frac{\p}{\p z^j}\right)=0, \qtq{and} g\left(\frac{\p}{\p \bar z^i},\frac{\p}{\p \bar
	z^j}\right)=0$$ Hence, it can be regarded  as a Hermitian form  $$h: T^{1,0}M\times T^{0,1}M\>\C,\ \ \ h=h_{i\bar j}dz^i\ts d\bar z^j$$ where $h_{i\bar j}=g\left(\frac{\p}{\p
	z^i},\frac{\p}{\p \bar z^j}\right)$.\\

We also extend  the
Bismut-Strominger connection $\nabla^\MS$ and torsion $T^\MS$ to $T_{\C}M$ in the $\C$-linear way.
Hence for any $a,b,c,d\in \C$ and $X,Y,Z,W\in T_\C M$,  $$
R^\MS(aX,bY,cZ, dW)=abcd\cdot R^\MS(X,Y,Z,W).$$
We also use the components of the complexified
curvature tensor $R^\S$, for example, $$ R^\MS_{i\bar j k\bar
	\ell}:=R^\MS\left(\frac{\p}{\p z^i}, \frac{\p}{\p \bar z^j},
\frac{\p}{\p z^k}, \frac{\p}{\p \bar z^\ell}\right),$$ and
in particular we use the following notation for the complexified
curvature tensor: $$ R^\MS_{i j k \ell}:=R^\MS\left(\frac{\p}{\p z^i},
\frac{\p}{\p z^j}, \frac{\p}{\p z^k}, \frac{\p}{\p z^\ell}\right).$$
In  local holomorphic coordinates $\{z^1,\cdots, z^n\}$ on $M$,
the complexified Christoffel symbols $\ ^\MLC\Gamma$ of  the Levi-Civita connection $\nabla^\MLC$ are given by
\beq \nabla^\MLC_{\frac{\p}{\p z^A}}\frac{\p}{\p z^B}=\ ^\MLC\Gamma_{AB}^C\frac{\p}{\p z^C}\eeq
where \beq
\ ^\MLC\Gamma_{AB}^C=\sum_{E}\frac{1}{2}g^{CE}\big(\frac{\p g_{AE}}{\p
	z^B}+\frac{\p g_{BE}}{\p z^A}-\frac{\p g_{AB}}{\p
	z^E}\big)=\sum_{E}\frac{1}{2}h^{CE}\big(\frac{\p h_{AE}}{\p
	z^B}+\frac{\p h_{BE}}{\p z^A}-\frac{\p h_{AB}}{\p z^E}\big)
\eeq
where $A,B,C,E\in \{1,\cdots,n,\bar{1},\cdots,\bar{n}\}$ and
$z^{A}=z^{i}$ if $A=i$, $z^{A}=\bar{z}^{i}$ if $A=\bar{i}$.
It is easy to see that
\beq     \ ^\MLC\Gamma_{ij}^k=\frac{1}{2}h^{k\bar\ell}\left(\frac{\p h_{i\bar\ell}}{\p z^j}+\frac{\p h_{j\bar\ell}}{\p z^i}\right), \    \ ^\MLC\Gamma_{i\bar j}^k=\ ^\MLC\Gamma_{\bar j i}^k=\frac{1}{2}h^{k\bar\ell}\left(\frac{\p h_{i\bar\ell}}{\p \bar z^j}-\frac{\p h_{i\bar j}}{\p \bar z^\ell}\right),\  \ ^\MLC\Gamma_{\bar i\bar j}^k=0\eeq
Similarly, the  complexified Christoffel symbols $\ ^\MS\Gamma$ of  $\nabla^\MS$  are
\beq \nabla^\MS_{\frac{\p}{\p z^A}}\frac{\p}{\p z^B}=\ ^\MS\Gamma_{AB}^C\frac{\p}{\p z^C}.\eeq
The formula \eqref{defnSB1} tells that 
\beq \ ^\MS\Gamma_{AB}^C=\ ^\MLC\Gamma_{AB}^C+\frac{1}{2}{^{\MS}T}_{AB}^C.\eeq
Moreover, by  formula \eqref{torsion1}, one deduces that the torsion tensor has two generators
\beq {^{\MS}T}_{ij\ell}=\left(d\omega_g\right)\left(J\frac{\p}{\p z^i}, J\frac{\p}{\p z^j}, J\frac{\p}{\p z^\ell}\right)=0, \quad {^{\MS}T}_{ij\bar \ell}=\frac{\p h_{i\bar{\ell}}}{\p z^j}-\frac{\p h_{j\bar{\ell}}}{\p z^i}. \eeq 
This is equivalent to   \beq    {^{\MS}T}_{ij}^k=h^{k\bar{\ell}}\left(\frac{\p h_{i\bar{\ell}}}{\p z^j}-\frac{\p h_{j\bar{\ell}}}{\p z^i}\right), \quad {^{\MS}T}_{\bar{ij}}^{k}=0,\label{torsionformula} \eeq
and
\beq \ ^\MS\Gamma_{ij}^k=h^{k\bar{\ell}}\frac{\p h_{i\bar{\ell}}}{\p z^j},\quad ^\MS\Gamma_{\bar j i}^k=2\ ^\MLC\Gamma_{\bar j i}^k=h^{k\bar\ell}\left(\frac{\p h_{i\bar\ell}}{\p \bar z^j}-\frac{\p h_{i\bar j}}{\p \bar z^\ell}\right),\quad ^\MS\Gamma_{i\bar j}^k=\ ^\MS\Gamma_{\bar i\bar j}^k=0.\label{Christoffel} \eeq
On the other hand,  the condition $\nabla^\MS J=0$ implies that $\nabla^\MS$ preserves both $T^{1,0}M$ and $T^{0,1}M$. Hence,
\beq R^\MS_{ijk\ell}= R^\MS_{i\bar j k\ell}=R^\MS_{\bar ijk\ell}=R^\MS_{\bar i\bar jk\ell}=0, \quad  R^\MS_{ij\bar k\bar \ell}= R^\MS_{i\bar j \bar k\bar \ell}=R^\MS_{\bar ij\bar k\bar \ell}=R^\MS_{\bar i\bar j\bar k\bar \ell}=0.\label{vanishing}\eeq

\begin{lemma}\label{lem-curvature relation}
	Let $(M,\omega)$ be a Hermitian manifold. For any $X,Y\in \Gamma(M, T_\C M)$, the complexification of the real Ricci curvature of $\nabla^\MS$ defined in \eqref{realSB} is
	\beq 	\Ric^{\MS,\C}\left(X,Y\right)=h^{i\bar{\ell}}R^{\MS}\left(\pzi,X, Y,\bpzl\right)+h^{\ell\bar{i}}R^{\MS}\left(\bpzi,X,Y,\pzl\right). \label{realcomplexfication}\eeq 
	In particular, 
	\beq 	\Ric^{\MS,\C}\left(\frac{\p }{\p z^p},\frac{\p}{\p\bar z^q}\right)=h^{\ell\bar{i}} R^{\MS}_{\bar i p\bar q \ell}=h^{\ell\bar{i}}R^\MS_{p\bar i \ell\bar q}. \eeq 
\end{lemma}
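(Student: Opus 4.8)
The plan is to exploit that the quantity defining $\Ric^{\MS}$ in \eqref{realSB} is a full metric contraction, hence independent of the frame in which it is computed, and to re-evaluate it in the complex frame $\{\pzi,\bpzi\}$. First I would invert the stated relations between the two frames to obtain $\pxi=\pzi+\bpzi$ and $\frac{\p}{\p x^I}=\sq\bigl(\pzi-\bpzi\bigr)$, which exhibits the complex frame as an invertible $\C$-linear change of the real frame. Since $\sum_{i,\ell=1}^{2n}g^{i\ell}R^{\MS}\!\left(\pxi,X,Y,\frac{\p}{\p x^\ell}\right)$ is the contraction of the inverse metric $(g^{i\ell})$ against the bilinear form $(U,V)\mapsto R^{\MS}(U,X,Y,V)$, it is unchanged if we replace the real frame by the complex frame and the real inverse metric by the inverse of the complexified metric.

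Next I would record the complexified metric in the complex frame. From $g\!\left(\pzi,\frac{\p}{\p z^j}\right)=0$, $g\!\left(\bpzi,\frac{\p}{\p\bar z^j}\right)=0$ and $h_{i\bar j}=g\!\left(\pzi,\frac{\p}{\p\bar z^j}\right)$, its matrix in the ordered basis $\bigl(\frac{\p}{\p z^1},\dots,\frac{\p}{\p z^n},\frac{\p}{\p\bar z^1},\dots,\frac{\p}{\p\bar z^n}\bigr)$ is block off-diagonal, with off-diagonal blocks $(h_{i\bar j})$ and its transpose. Inverting this block matrix shows that the inverse metric likewise has only mixed components, namely $g^{z^i\bar z^\ell}=h^{i\bar\ell}$ and $g^{\bar z^i z^\ell}=h^{\ell\bar i}$, while $g^{z^iz^j}=g^{\bar z^i\bar z^j}=0$. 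Contracting $(U,V)\mapsto R^{\MS}(U,X,Y,V)$ against this inverse metric leaves exactly the two surviving mixed terms, which is precisely \eqref{realcomplexfication}.

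For the second assertion I would substitute $X=\frac{\p}{\p z^p}$ and $Y=\frac{\p}{\p\bar z^q}$ into \eqref{realcomplexfication}. The first term becomes $h^{i\bar\ell}R^{\MS}_{ip\bar q\bar\ell}$, whose third and fourth arguments are both antiholomorphic, so by the vanishing relations \eqref{vanishing} it is identically zero; only $h^{\ell\bar i}R^{\MS}_{\bar i p\bar q\ell}$ remains, giving the first equality. The second equality $R^{\MS}_{\bar i p\bar q\ell}=R^{\MS}_{p\bar i\ell\bar q}$ then follows by applying the two skew-symmetries in \eqref{skewsymmetry} in turn: interchanging the first pair of arguments and then the last pair each contributes a factor $-1$, and the two signs cancel.

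The argument is essentially bookkeeping, and I expect the only delicate point to be matching the index placement of the inverse Hermitian metric $h^{i\bar\ell}$, $h^{\ell\bar i}$ with the correct off-diagonal blocks of the inverted metric matrix (using that $(h_{i\bar j})$ is Hermitian so that $(A^{\mathsf T})^{-1}$ and $A^{-1}$ are conjugate), together with confirming that the $\C$-linear extension of $g$ is compatible with the frame change so that the contraction is genuinely frame-independent. There is no substantive analytic or geometric obstacle beyond this verification.
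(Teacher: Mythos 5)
Your proof is correct, but it reaches \eqref{realcomplexfication} by a genuinely different route than the paper. The paper's proof is a direct complexification computation: it splits the real-index sum defining $\Ric^{\MS}$ in \eqref{realSB} into the four blocks $g^{i\ell}$, $g^{i,\ell+n}$, $g^{i+n,\ell}$, $g^{i+n,\ell+n}$, substitutes $\pxi=\pzi+\bpzi$ and $\frac{\p}{\p x^{i+n}}=\sq\left(\pzi-\bpzi\right)$ together with the explicit relation $h^{i\bar j}=2\left(g^{ij}-\sq g^{i,n+j}\right)$, and collects terms, citing \cite[Lemma~7.1]{LY16} for the analogous manipulation. You instead invoke the frame-independence of the $g$-trace of the $\C$-bilinear form $(U,V)\mapsto R^{\MS}(U,X,Y,V)$ and evaluate that trace in the complex frame, where the complexified metric is block anti-diagonal with blocks $(h_{i\bar j})$ and its transpose; inverting this matrix immediately produces the two surviving mixed terms of \eqref{realcomplexfication}. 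Your route is cleaner: it avoids the four-block expansion and all factor-of-two bookkeeping, the only obligations being the (standard) invariance of the trace under $\C$-linear changes of frame and the identification of the inverse blocks $g^{z^i\bar z^\ell}=h^{i\bar\ell}$, $g^{\bar z^i z^\ell}=h^{\ell\bar i}$, both of which you address; what the paper's computation buys in exchange is an explicit link between $h^{i\bar j}$ and the real components $g^{ij}$, $g^{i,n+j}$, matching the conventions of \cite{LY16}. For the second displayed identity the two arguments coincide: the term $h^{i\bar\ell}R^{\MS}_{ip\bar q\bar\ell}$ vanishes by \eqref{vanishing}, and the rearrangement $R^{\MS}_{\bar i p\bar q\ell}=R^{\MS}_{p\bar i\ell\bar q}$ follows from applying the two skew-symmetries of \eqref{skewsymmetry}, whose signs cancel, exactly as you say.
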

\begin{proof} The first identity follows from a complexification process. Indeed,
	\be 
	\Ric^{\MS,\C}\left(X,Y\right)
	&=&\sum_{i,\ell=1}^n \left(g^{i\ell}R^{\MS}\left(\pxi,X,Y,\pxl\right)+g^{i,\ell+n}R^{\MS}\left(\pxi,X,Y,\frac{\p}{\p x^{\ell+n}}\right)\right.\\
	&&\left.+g^{i+n,\ell}R^{\MS}\left(\frac{\p}{\p x^{i+n}},X,Y,\pxl\right)+g^{i+n,\ell+n}R^{\MS}\left(\frac{\p}{\p x^{i+n}},X,Y,\frac{\p}{\p x^{\ell+n}}\right)\right).\ee 
	By using complexification relations
	$$\frac{\p}{\p x^i}=\pzi+\bpzi, \quad \frac{\partial}{\partial x^{i+n}}=\sqrt{-1}\left(\frac{\partial}{\partial z^i}-\frac{\partial}{\partial \bar{z}^i}\right), \quad h^{i\bar
		j}=2\left(g^{ij}-\sq g^{i,n+j}\right)$$
	and   similar computations as in \cite[Lemma~7.1]{LY16}	one gets \eqref{realcomplexfication}. The second identity follows from the fact that $R^{\MS}_{ij\bar k\bar\ell}=0$.
\end{proof}

\bproposition\label{curvaturecomparison} Let $(M,\omega)$ be a  balanced Hermitian manifold, i.e. $d\omega^{n-1}=0$. For any real vector field $X=X^i\frac{\p}{\p z^i}+\bar{X}^i\frac{\p}{\p \bar z^i}\in \Gamma(M,T_\R M)$, we have 
\beq \operatorname{Ric}^{\MS}(X, X)= 2\left(h^{i\bar \ell} R^{\MS}_{i\bar j k\bar\ell}\right)X^{k}\bar X^j. \label{realricci}\eeq 
\eproposition 

\bproof 
Let $X, Y\in \Gamma(M,T_\R M)$. One can write them as
$$
X=X^{1,0}+X^{0,1} \quad  \quad Y=Y^{1,0}+Y^{0,1}
$$
where $X^{1,0}, Y^{1,0}\in \Gamma(M, T^{1,0}M)$ and $X^{0,1}=\bar{X}^{1,0}$, $Y^{0,1}=\bar Y^{1,0}$. By the relation \eqref{realcomplexfication} and \eqref{vanishing},   we deduce that
\begin{eqnarray} 
&&\nonumber\operatorname{Ric}^{\MS}(X, Y)=\operatorname{Ric}^{\MS,\C}(X, Y)\\
&= &\nonumber \sum_{i, \ell=1}^n\left[h^{i \bar{\ell}} R^{\MS}\left(\frac{\partial}{\partial z^i}, X^{1,0}+X^{0,1}, Y^{1,0}, \frac{\partial}{\partial \bar{z}^{\ell}}\right)+h^{\ell \bar{i}} R^{\MS}\left(\frac{\partial}{\partial \bar{z}^i}, X^{1,0}+X^{0,1}, Y^{0,1}, \frac{\partial}{\partial z^{\ell}}\right)\right]\\ &= & \sum_{i, \ell=1}^n 2 \operatorname{Re}\left[h^{i\bar{\ell}}\left(R^{\MS}\left(\frac{\partial}{\partial z^i}, X^{0,1}, Y^{1,0}, \frac{\partial}{\partial \bar{z}^{\ell}}\right)+R^{\MS}\left(\frac{\partial}{\partial z^i}, X^{1,0}, Y^{1,0}, \frac{\partial}{\partial \bar{z}^{\ell}}\right)\right)\right]. \end{eqnarray}
In particular, we obtain 
\beq \operatorname{Ric}^{\MS}(X, X)= 2\mathrm{Re}\left[\left(h^{i\bar \ell} R^{\MS}_{i\bar j k\bar\ell}\right)X^{k}\bar X^j +\left(h^{i\bar \ell} R^{\MS}_{i j k\bar\ell}\right)X^{k} X^j\right]. \eeq 
Since $\ ^\MS\Gamma_{ i j}^{\bar k}=0$, by complexification of the curvature formula, we have
\beq h^{i\bar \ell} R^{\MS}_{i j k\bar\ell}=\frac{\p{ }^\MS\Gamma_{
		jk}^i}{\p z^i}-\frac{\p { }^\MS\Gamma_{ik}^{i}}{\p
	z^j}-{ }^\MS\Gamma^{i}_{jp}{ }^\MS\Gamma^{p}_{ik}+{ }^\MS\Gamma^{i}_{
	ip}{ }^\MS\Gamma^{p}_{jk}.\label{balance1}\eeq 
Moreover, by using \eqref{Christoffel},
\beq \frac{\p{ }^\MS\Gamma_{
		jk}^i}{\p x^i}=-{ }^\MS\Gamma_{pi}^i{ }^\MS\Gamma_{jk}^p+h^{i\bar\ell}\frac{\p^2 h_{j\bar\ell}}{\p z^i\p z^k}, \quad \frac{\p{ }^\MS\Gamma_{ik}^{i}}{\p
	x^j}=-{ }^\MS\Gamma_{pj}^i{ }^\MS\Gamma_{ik}^p+h^{i\bar\ell}\frac{\p^2 h_{i\bar\ell}}{\p z^j\p z^k}.  \label{balance2}\eeq 
Similarly, by \eqref{torsionformula}, 
\beq \frac{\p { }^\MS T_{ij}^{i}}{\p
	x^k}=-{ }^\MS\Gamma_{pk}^i{ }^\MS\Gamma_{ij}^p+{ }^\MS\Gamma_{pk}^i{ }^\MS\Gamma_{ji}^p+h^{i\bar\ell}\frac{\p^2 h_{i\bar\ell}}{\p z^j\p z^k}-  h^{i\bar\ell}\frac{\p^2 h_{j\bar\ell}}{\p z^k\p z^i}. \label{balance3}\eeq 
By  \eqref{balance1}, \eqref{balance2} and \eqref{balance3}, we obtain
\beq h^{i\bar \ell} R^{\MS}_{i j k\bar\ell}= -\frac{\p { }^\MS T_{ij}^{i}}{\p
	x^k}+{ }^\MS T_{ip}^i{ }^\MS\Gamma^{p}_{jk}.\eeq 
On the other hand, by the Bochner-Kodaira formula $[\bp^*, L]=\sqrt{-1}(\partial+[\Lambda, \partial \omega])$, 
\beq 
\bar{\partial}^* \omega  =\sqrt{-1} \Lambda(\partial \omega) =\sqrt{-1} h^{s \bar{q}}\left(\frac{\partial h_{s \bar{q}}}{\partial z^k}-\frac{\partial h_{k \bar{q}}}{\partial z^s}\right) d z^k =\sqrt{-1}{ }^\MS T_{s k}^s d z^k .
\eeq 
Here we assume $M$ is compact. In general,  if $d\omega^{n-1}=0$, by using $*\p\omega^{n-1}=0$,  we obtain ${ }^\MS T_{s k}^s=0$ for each $k$ and so
\beq h^{i\bar \ell} R^{\MS}_{i j k\bar\ell}=0.\eeq 
By using a similar computation, one can show that
$\left( h^{i\bar \ell} R^{\MS}_{i \bar j k\bar\ell}\right)$ 
is a Hermitian matrix (see also \cite[Corollary~1.8]{WY25} on the formula of $	\mathfrak{R}ic^{(3)}(\omega, 1)$). Therefore, we obtain \eqref{realricci}.
\eproof

   \vskip 2\baselineskip
   \section{Index forms on Hermitian manifolds}

     In this section, we formulate the second variation formula and the index form by using the Strominger-Bismut connection $\nabla^\MS$. In particular, we prove Theorem \ref{thm-2nd} and Theorem \ref{thm-index form}.\\

   Let $\gamma:[a,b]\to M$ is a unit-speed geodesic in a Riemannian manifold $(M,g)$. If   $\alpha:[a,b]\times(-\eps_1,\eps_1)\times(-\eps_2,\eps_2)\to M$ is  a variation of $\gamma(t)$ with variational vector fields $V$ and  $W$, i.e.
   $
   \alpha(t,0,0)=\gamma(t)$
   and
   $$
   \alpha_*\left(\frac{\p}{\p s_1}\right)\bigg|_{s_1=s_2=0}=V,\ \ \alpha_*\left(\frac{\p}{\p s_2}\right)\bigg|_{s_1=s_2=0}=W.
   $$
   It is well-known that, the second variation of the energy of $\gamma$ is given by
   \begin{eqnarray} \frac{\p^2  E(\alpha)}{\p s_1\p s_2}\bigg|_{s_1=s_2=0}\nonumber&
   =&\int_a^b\left\{\LL\hat{\nabla}^{\text{LC}}_{\frac{d}{dt}}V,\hat{\nabla}^{\text{LC}}_{\frac{d}{dt}}W\RL-R^{\text{LC}}(V,\gamma',\gamma',W)\right\}dt\\&&+\left.\LL\left.\left(\bar{\nabla}^\MLC_{\frac{\p}{\p s_1}}\alpha_*\left(\frac{\p}{\p s_2}\right)\right)\right|_{s_1=s_2=0},\gamma'\RL\right|_{t=a}^{t=b},\label{secondvariation}
   \end{eqnarray} where $\hat\nabla^{\MLC}$  and $\overline\nabla^{\MLC}$ are the induced Levi-Civita connections on $\gamma^*TM$ and $\alpha^*TM$ respectively. Recall that the index form $I_\gamma: \Gamma(\gamma^*TM)\times \Gamma(\gamma^*TM)\>\R$ is given by
   \beq I_\gamma(V,W) =\int_a^b\left\{\LL\hat{\nabla}^{\text{LC}}_{\frac{d}{dt}}V,\hat{\nabla}^{\text{LC}}_{\frac{d}{dt}}W\RL-R^{\text{LC}}(V,\gamma',\gamma',W)\right\}dt. \eeq
   When $\alpha$ is a proper second variation of $\gamma$, i.e. $\alpha(a, s_1,s_2)\equiv\gamma(a)$, $\alpha(b, s_1,s_2)\equiv \gamma(b)$,  one has
   \beq     \frac{\p^2  E(\alpha)}{\p s_1\p s_2}\bigg|_{s_1=s_2=0}
   =\int_a^b\left\{\LL\hat{\nabla}^{\text{LC}}_{\frac{d}{dt}}V,\hat{\nabla}^{\text{LC}}_{\frac{d}{dt}}W\RL-R^{\text{LC}}(V,\gamma',\gamma',W)\right\}dt. \eeq

   \vskip 1\baselineskip

   \bproof[Proof of Theorem \ref{thm-2nd}]
   Since $\bar\nabla^\MS$ is compatible with the induced metric on $\alpha^*TM$,  a straightforward computation shows
   \be  \frac{1}{2}\frac{\p^2}{\p s_1\p s_2}\left| \alpha_*\left(\frac{\p}{\p t}\right)\right|^2=\frac{\p}{\p s_1}\LL \bar{\nabla}_{\frac{\p}{\p s_2}}^{\MS}\alpha_*\left(\frac{\p}{\p t}\right),\alpha_*\left(\frac{\p}{\p t}\right)\RL=\frac{\p}{\p s_1}\LL \bar{\nabla}_{\frac{\p}{\p t}}^{\MS}\alpha_*\left(\frac{\p}{\p s_2}\right),\alpha_*\left(\frac{\p}{\p t}\right)\RL \ee
   where the second identity follows from the relation
   $$\LL \bar{\nabla}_{\frac{\p}{\p s_2}}^{\MS}\alpha_*\left(\frac{\p}{\p t}\right),\alpha_*\left(\frac{\p}{\p t}\right)\RL=\LL \bar{\nabla}_{\frac{\p}{\p t}}^{\MS}\alpha_*\left(\frac{\p}{\p s_2}\right),\alpha_*\left(\frac{\p}{\p t}\right)\RL$$
   since  the torsion term $T\left(\alpha_*\left(\frac{\p}{\p s_2}\right),\alpha_*\left(\frac{\p}{\p t}\right) ,\alpha_*\left(\frac{\p}{\p t}\right)\right)=0$. Hence,
   \be \frac{\p}{\p s_1}\LL \bar{\nabla}_{\frac{\p}{\p t}}^{\MS}\alpha_*\left(\frac{\p}{\p s_2}\right),\alpha_*\left(\frac{\p}{\p t}\right)\RL &=&\LL \bar{\nabla}_{\frac{\p}{\p s_1}}^{\MS}\bar{\nabla}_{\frac{\p}{\p t}}^{\MS}\alpha_*\left(\frac{\p}{\p s_2}\right),\alpha_*\left(\frac{\p}{\p t}\right)\RL\\&&+\LL\bar{\nabla}_{\frac{\p}{\p t}}^{\MS}\alpha_*\left(\frac{\p}{\p s_2}\right),\bar{\nabla}_{\frac{\p}{\p s_1}}^{\MS}\alpha_*\left(\frac{\p}{\p t}\right)\RL. \ee
   By using the curvature formula of $R^\MS$, one has
   $$\bar{\nabla}_{\frac{\p}{\p s_1}}^{\MS}\bar{\nabla}_{\frac{\p}{\p t}}^{\MS}\alpha_*\left(\frac{\p}{\p s_2}\right)= \bar{\nabla}_{\frac{\p}{\p t}}^{\MS}\bar{\nabla}_{\frac{\p}{\p s_1}}^{\MS}\alpha_*\left(\frac{\p}{\p s_2}\right)+R^{\MS}\left(\alpha_*\left(\frac{\p}{\p s_1}\right),\alpha_*\left(\frac{\p}{\p t}\right)\right)\alpha_*\left(\frac{\p}{\p s_2}\right).$$
   Therefore,
   \be \frac{1}{2}\frac{\p^2}{\p s_1\p s_2}\left| \alpha_*\left(\frac{\p}{\p t}\right)\right|^2
   &=&\LL \bar{\nabla}_{\frac{\p}{\p t}}^{\MS}\bar{\nabla}_{\frac{\p}{\p s_1}}^{\MS}\alpha_*\left(\frac{\p}{\p s_2}\right),\alpha_*\left(\frac{\p}{\p t}\right)\RL+\LL\bar{\nabla}_{\frac{\p}{\p t}}^{\MS}\alpha_*\left(\frac{\p}{\p s_2}\right),\bar{\nabla}_{\frac{\p}{\p t}}^{\MS}\alpha_*\left(\frac{\p}{\p s_1}\right)\RL\\&&+R^{\MS}\left(\alpha_*\left(\frac{\p}{\p s_1}\right),\alpha_*\left(\frac{\p}{\p t}\right),\alpha_*\left(\frac{\p}{\p s_2}\right),\alpha_*\left(\frac{\p}{\p t}\right)\right)\\
   &&+T^\MS\left(\alpha_*\left(\frac{\p}{\p s_1}\right),\alpha_*\left(\frac{\p}{\p t}\right),\bar{\nabla}_{\frac{\p}{\p t}}^{\MS}\alpha_*\left(\frac{\p}{\p s_2}\right)\right). \ee
   Note also that the first term on the right hand side can be written as
   \be \LL \bar{\nabla}_{\frac{\p}{\p t}}^{\MS}\bar{\nabla}_{\frac{\p}{\p s_1}}^{\MS}\alpha_*\left(\frac{\p}{\p s_2}\right),\alpha_*\left(\frac{\p}{\p t}\right)\RL&=&\frac{\p}{\p t}\LL\bar{\nabla}^\MS_{\frac{\p}{\p s_1}}\alpha_*\left(\frac{\p}{\p s_2}\right),\alpha_*\left(\frac{\p}{\p t}\right)\RL\\&&-\LL\bar{\nabla}_{\frac{\p}{\p s_1}}^{\MS}\alpha_*\left(\frac{\p}{\p s_2}\right),\bar{\nabla}_{\frac{\p}{\p t}}^{\MS}\alpha_*\left(\frac{\p}{\p t}\right)\RL.\ee
   When $s_1=s_2=0$,
   $$\left.\bar{\nabla}_{\frac{\p}{\p t}}^{\MS}\alpha_*\left(\frac{\p}{\p t}\right)\right|_{s_1=s_2=0}=\hat{\nabla}^\MS_{\frac{d}{dt}}\gamma'=0,$$
   $$\left.\LL\bar{\nabla}_{\frac{\p}{\p t}}^{\MS}\alpha_*\left(\frac{\p}{\p s_2}\right),\bar{\nabla}_{\frac{\p}{\p t}}^{\MS}\alpha_*\left(\frac{\p}{\p s_1}\right)\RL\right|_{s_1=s_2=0}=\LL\hat{\nabla}^{\MS}_{\frac{d}{d t}}V,\hat{\nabla}^{\MS}_{\frac{d}{d t}}W\RL,$$
   and 
   $$\left.T^\MS\left(\alpha_*\left(\frac{\p}{\p s_1}\right),\alpha_*\left(\frac{\p}{\p t}\right),\bar{\nabla}_{\frac{\p}{\p t}}^{\MS}\alpha_*\left(\frac{\p}{\p s_2}\right)\right)\right|_{s_1=s_2=0}=T^{\MS}\left(V,\gamma',\hat{\nabla}^{\MS}_{\frac{d}{d t}}W\right).$$
    Moreover, by using \eqref{skewsymmetry},
    \beq\left. R^{\MS}\left(\alpha_*\left(\frac{\p}{\p s_1}\right),\alpha_*\left(\frac{\p}{\p t}\right),\alpha_*\left(\frac{\p}{\p s_2}\right),\alpha_*\left(\frac{\p}{\p t}\right)\right)\right|_{s_1=s_2=0}=-R^\MS(V,\gamma',\gamma', W),\eeq 
    and
     one obtains the desired variation formula \eqref{2nd}. If  $\alpha$ is a proper variation of $\gamma$, one has $\alpha(a, s_1,s_2)=\gamma(a)$, $\alpha(b, s_1,s_2)=\gamma(b)$, and so
   $$\left(\bar{\nabla}^\MS_{\frac{\p}{\p s_1}}\alpha_*\left(\frac{\p}{\p s_2}\right)\right)(a,s_1,s_2)=\left(\bar{\nabla}^\MS_{\frac{\p}{\p s_1}}\alpha_*\left(\frac{\p}{\p s_2}\right)\right)(b,s_1,s_2)=0.$$
   Hence, \eqref{normalsecondvariation} follows from \eqref{2nd}.
   \eproof

   \begin{proof}[Proof of Theorem \ref{thm-index form}]
    By  Theorem \ref{thm-2nd} and formula \eqref{secondvariation},  for any smooth vector fields $V$ and $W$,
    \be
    I_\gamma(V,W)&=&\frac{\p^2E(\alpha)}{\p s_1\p s_2}\bigg|_{s_1=s_2=0}-\left.\LL\left.\left(\bar{\nabla}^\MLC_{\frac{\p}{\p s_1}}\alpha_*\left(\frac{\p}{\p s_2}\right)\right)\right|_{s_1=s_2=0},\gamma'\RL\right|_{t=a}^{t=b}\\
    &=&\int_a^b\left\{\LL\hat{\nabla}^{\MS}_{\frac{d}{d t}}V,\hat{\nabla}^{\MS}_{\frac{d}{d t}}W\RL+T^{\MS}\left(V,\gamma',\hat{\nabla}^{\MS}_{\frac{d}{d t}}W\right)-R^{\MS}(V,\gamma',\gamma',W)\right\}dt\\
    &&+\left.\LL\left.\left(\bar{\nabla}^\MS_{\frac{\p}{\p s_1}}\alpha_*\left(\frac{\p}{\p s_2}\right)\right)\right|_{s_1=s_2=0},\gamma'\RL\right|_{t=a}^{t=b}-\left.\LL\left.\left(\bar{\nabla}^\MLC_{\frac{\p}{\p s_1}}\alpha_*\left(\frac{\p}{\p s_2}\right)\right)\right|_{s_1=s_2=0},\gamma'\RL\right|_{t=a}^{t=b}.\ee
    On the other hand, by Lemma \ref{definition of Strominger connection}, the last line equals
    \beq \LL\bar{\nabla}^{\MS}_{\frac{\p}{\p s_1}}\alpha_*\left(\frac{\p}{\p s_2}\right)-\bar{\nabla}^{\MLC}_{\frac{\p}{\p s_1}}\alpha_*\left(\frac{\p}{\p s_2}\right)\bigg|_{s_1=s_2=0},\gamma'\RL\bigg|_{t=a}^{t=b}=\frac{1}{2}T^{\MS}(V,W,\gamma')\bigg|_{t=a}^{t=b}.
    \eeq
    Hence, we obtain the desired expression.
   \end{proof}

   \vskip 1\baselineskip

   \section{Proofs of comparison theorems}

   In this section we prove Theorem \ref{Myers},  Theorem \ref{thm-compactness from HSC} and Theorem \ref{cor-auto}.

   \bproof[Proof of Theorem \ref{Myers}]  Suppose, by contradiction, that $\gamma:[0,L]\to M$ is a unit speed minimal geodesic with length $L>\pi/\sqrt{K}$.
   Let $\alpha:[a,b]\times(-\eps_1,\eps_1)\times(-\eps_2,\eps_2)\to M$ be a \textbf{proper} second variation of $\gamma$ with variational vector fields
   \beq
   \alpha_*\left(\frac{\p}{\p s_1}\right)\bigg|_{s_1=s_2=0}=\alpha_*\left(\frac{\p}{\p s_2}\right)\bigg|_{s_1=s_2=0}=V.
   \eeq
   Let $\{e_1,\ldots,e_n,e_{n+1},\ldots,e_{2n}\}$ be an orthonormal basis for $T_{\gamma(0)}M$ such that $Je_i=e_{i+n},e_1=\gamma'(0)$.
   Consider $(2n-1)$ variation fields
   $$
   V_j(t)=f(t)\cdot P^{\text{SB}}_{0,t;\gamma}e_j
   $$ for $2\leq j\leq 2n$  and $f(0)=f(L)=0$.
   Then
   $$
   \hat{\nabla}^{\text{SB}}_{\frac{d}{d t}}V_j=f'(t)P^{\text{SB}}_{0,t;\gamma}e_j.
   $$
 By Theorem \ref{thm-2nd}, we obtain
   \be
   0&\leq&\sum_{j=2}^{2n}\int_0^L\bigg(\LL\hat{\nabla}^\MS_{\frac{d}{d t}}V_j,\hat{\nabla}^{\MS}_{\frac{d}{d t}}V_j\RL+T\left(V_j,\gamma',\hat{\nabla}^{\MS}_{\frac{d}{d t}}V_j\right)-R^{\MS}(V_j,\gamma',\gamma',V_j)\bigg)dt\\
   &=&\int_0^L (2n-1)(f'(t))^2-(f(t))^2\Ric^{\MS}\left(\gamma',\gamma'\right)dt.
   \ee
   Suppose that  \beq \mathfrak{Ric}^{\MS} (V,\bar V)\geq (2n-1)K |V|^2, \eeq 
   for each $V\in\Gamma(M,T^{1,0}M)$, then by Proposition \ref{curvaturecomparison},
   \beq \Ric^{\MS}\left(\gamma',\gamma'\right)\geq (2n-1)K.\eeq 
   If we choose $f(t)=\sin(\pi t/L)$, then
   \be
   0&\leq& (2n-1)\int_0^L \left(\frac{\pi^2}{L^2}\cos^2(\pi t/L)-K\sin^2(\pi t/L)\right)dt\\
   &=&(2n-1)\int_0^L\sin^2(\pi t/L)\left(\frac{\pi^2}{L^2}-K\right)dt<0.
   \ee
 This is a contradiction.  Therefore we conclude that
   $ L\leq\pi/\sqrt{K}$ and  $M$ is compact. By considering the universal of $M$, we deduce that $\pi_1(M)$ is finite. \\
   
   We give a sketched proof of the Laplacian comparison theorem under the curvature condition \eqref{complexcurvature}.
   Suppose $q\in M\setminus(\mathrm{cut}(p)\cup\{p\})$. Let $\gamma:[0,b]\to M$ be the unit speed minimal geodesic such that $\gamma(0)=p,\gamma(b)=q$.
   Let $\{e_1,\ldots,e_n,e_{n+1},\ldots,e_{2n}\}$ be an orthonormal basis for $T_pM$ such that $Je_i=e_{i+n},e_1=\gamma'(0)$.
   Define the parallel transport of $e_i$ along $\gamma$ with respect to the Strominger-Bismut connection $\nabla^\MS$
   $$
   e^\MS_i(t)=P_{0,t;\gamma}^{\MS}e_i,\ \ \  1\leq i\leq 2n.
   $$
   Let $X_i(t)$ be the Jacobi field along $\gamma$ such that $X_i(0)=0,X_i(b)=e^\MS_i(b)$, i.e.
   $$\hat\nabla^{\MLC}_{\frac{d}{dt}}\hat\nabla^{\MLC}_{\frac{d}{dt}} X_i+R^{\MLC}(X_i,\gamma')\gamma'=0.$$
   It is easy to see that
   $X_1(t)=\frac{t \gamma'}{b}$ and $\LL X_i,\gamma'\RL=0$ for $i\geq 2$.
   It is well-known that for any normal Jacobi
   field $X$ along $\gamma$ with $X(0)=0$ and
   any vector field $W(t)$ along $\gamma$ with $W(0)=0$, one has
   \begin{eqnarray} \left( \mathrm{Hess}\ r\right) (X(s), W(s))\nonumber&=&I_\gamma (X(s), W(s))\\ \nonumber&=&\int_0^s \left(\left\la \hat\nabla^{\MLC}_{\frac{d}{dt}} X(t), \hat\nabla^{\MLC}_{\frac{d}{dt}} W(t)\right\ra- R^{\MLC}(X(t),\gamma'(t),\gamma'(t), W(t))\right)dt.\end{eqnarray}
   Since $ \left( \mathrm{Hess}\ r\right) (\gamma'(b), \gamma'(b))=0$,   one obtains
   \beq
   \Delta_g r(q)=\sum_{i=2}^{2n}I_\gamma(X_i(b),X_i(b)).
   \eeq
   We define
   $
   Z_i(t)=f(t)e^\MS_i(t)
   $
   with $f(t)=\frac{\sn_k(t)}{\sn_k(b)}$ for $i=2,\cdots, 2n$.   Then $Z_i(a)=X_i(a)=0$ and $Z_i(b)=X_i(b)$. This implies
   \beq \Delta_g r(q)=\sum_{i=2}^{2n}I_\gamma (X_i(b),X_i(b))\leq \sum_{i=2}^{2n}I_\gamma(Z_i(b),Z_i(b)).\eeq
   For $2\leq i\leq 2n$, by Theorem \ref{thm-index form},
   \be I_\gamma(Z_i(b),Z_i(b))&=&\int_0^b\left\{\LL\hat{\nabla}^{\MS}_{\frac{d}{d t}}Z_i,\hat{\nabla}^{\MS}_{\frac{d}{d t}}Z_i\RL+T^{\MS}\left(Z_i,\gamma',\hat{\nabla}^{\MS}_{\frac{d}{d t}}Z_i\right)-R^{\MS}(Z_i,\gamma',\gamma',Z_i)\right\}dt\\
   &=&\int_0^b\left(|f'(t)|^2-f^2(t)R^{\MS}(e^\MS_i,\gamma',\gamma',e^\MS_i) \right)dt.\ee
 On the other hand,  we have the estimate
   $$\sum_{i=1}^{2n} R^{\MS}(e^\MS_i,\gamma',\gamma',e^\MS_i) \geq (2n-1)K.$$
   Hence
   \beq
   \Delta_g r(q)
   \leq(2n-1)\int_0^b\left(|f'(t)|^2-Kf^2(t)\right)dt
   =(2n-1)  \frac{\sn'_K(r)}{\sn_K(r)}.\label{Laplacian}
   \eeq 
   
   \vskip 1\baselineskip
   
   It is well-established that the Bishop-Gromov volume comparison theorem follows from the Laplacian comparison theorem and Myers' diameter estimate under Ricci curvature bounds. We adopt an analogous approach in our setting, leveraging the identical geodesic structures and exponential maps between two frameworks. Let $p\in M$ and$U=M\setminus \mathrm{cut}(p)$.  Let $B(p,\delta)$ be the
   {metric ball} centered at $p$ with radius $\delta$, and $g_K$
   be the constant curvature metric.   Consider the trivialization map in the normal coordinates on the domain of injective radius
$\Sigma(p)$, 
$$\R^+\times \S^{2n-1}\stackrel{\Phi}{\>}T_pM\backslash\{0\}\cong
\R^{2n}\backslash\{0\}\>U\backslash\{p\}$$ given by $\Phi(\rho,\omega)=\rho\omega$,
\begin{eqnarray} \mathrm{Vol}_g\left(B(p,\delta)\right)&=&\nonumber\mathrm{Vol}_g\left(\exp_p(B(0,\delta)\cap \Sigma(p))\right)\\
&=&\int_{\S^{2n-1}}\int_0^\delta \chi_{\Sigma(p)}\cdot \sqrt{\det g}
\circ \Phi(\rho,\omega)\cdot  \rho^{2n-1}\cdot  d\rho
d{\mathrm{vol}}_{\S^{2n-1}}.
\end{eqnarray}
For each fixed $\omega\in\S^{n-1}$
the volume density ratio is defined as \beq \lambda(\rho,\omega)=
\frac{\rho^{2n-1}\sqrt{\det g}\circ \Phi(\rho,
	\omega)}{\sn_K^{2n-1}(\rho)}. \eeq
By using the Laplacian comparison result \eqref{Laplacian}, one has \beq \p_r \log \left( r^{2n-1}\sqrt{\det g}\right)=\Delta_g r\geq (2n-1)\frac{\sn_K'(r)}{\sn_K(r)}=\p_r\log\left(\sn_K^{2n-1}(r)\right). \eeq 
Hence, the volume density ratio
$\lambda(\rho,\omega)$ is  decreasing in $\rho\in (0,\delta)$ and $
\lim\limits_{\rho\>0}\lambda(\rho,\omega)=1$.
 If $K=1/R^2>0$,
$$ \frac{\mathrm{Vol}_g(B(p,\delta))}{\mathrm{Vol}_{g_k}(B(p,\delta))}=\frac{1}{\mathrm{Vol}(\S^{2n-1})}\int_{\S^{2n-1}}\left(\frac{\int_0^\delta \chi_{\Sigma(p)}\cdot \rho^{2n-1}\sqrt{\det g}\circ \Phi(\rho, \omega)d\rho}{\int_0^\delta\chi_{B(0,\pi R)}\cdot \sn_K^{2n-1}(\rho) d\rho}\right)d\mathrm{vol}_{\S^{2n-1}}.$$
By first part of Theorem \ref{Myers}, $\diam(M,g)\leq \pi R$.  Therefore, $\Sigma(p)\subset B(0,\pi R)$ and $$\tilde  \lambda(\rho,\omega)= \frac{\chi_{\Sigma(p)}\cdot\rho^{2n-1}\sqrt{\det g}\circ \Phi(\rho, \omega)}{\chi_{B(0,\pi R)}\cdot\sn_K^{2n-1}(\rho)} $$ is nonincreasing in $\rho$. Hence $
\frac{\mathrm{Vol}_g(B(p,\delta))}{\mathrm{Vol}_{g_K}(B(p,\delta))}$
is nonincreasing in $\rho$. On ther other hand,
$\lim\limits_{\delta\>0}\frac{\mathrm{Vol}_g(B(p,\delta))}{\mathrm{Vol}_{g_K}(B(p,\delta))}=1$, 
we deduced that $
\mathrm{Vol}_g(B(p,\delta))\leq \mathrm{Vol}_{g_K}(B(p,\delta))$.   When $\delta$ approaches $\pi/\sqrt{K}$, one deduces that $ \mathrm{Vol}(M,\omega)\leq \mathrm{Vol}\left(\mathbb S^{2n}\left(1/\sqrt{K}\right), g_{\mathrm{can}}\right)$.
\eproof 

\bremark It is obvious that Theorem \ref{Myers} also holds if the curvature condition \eqref{complexcurvature} is replaced by $$\Ric^{\MS} (X,X)\geq (2n-1)K |X|^2_g,\quad  X\in T_\R M.$$
\eremark 
  
  \bproof[Proof of Theorem \ref{thm-compactness from HSC}]   Suppose $\gamma:[0,L]\to M$ is a unit speed minimal geodesic with length $L>\pi/\sqrt{K}$.
   Let $\alpha:[a,b]\times(-\eps_1,\eps_1)\times(-\eps_2,\eps_2)\to M$ be a \textbf{proper} second variation of $\gamma$ with variational vector fields $V$ and $W$.
   In particular, we choose  \beq V(t)=W(t)=\sin\left(\frac{\pi t}{L}\right)J\gamma'(t). \eeq
   where $J$ is the complex structure of $(M,\omega)$.    A simple calculation shows that
  \beq
   \hat{\nabla}^{\MS}_{\frac{d}{d t}}\left(J\gamma'(t)\right)=J\left(\hat{\nabla}^{\MS}_{\frac{d}{d t}}\gamma'(t)\right)=0.
\eeq
   Hence, by Theorem \ref{thm-2nd}, we obtain
   $$0\leq \frac{\p^2 E(\alpha)}{\p s_1\p s_2}\bigg|_{s_1=s_2=0}
   =\int_0^L\left\{\LL\hat{\nabla}^{\MS}_{\frac{d}{d t}}V,\hat{\nabla}^{\MS}_{\frac{d}{d t}}W\RL+T^{\MS}\left(V,\gamma',\hat{\nabla}^{\MS}_{\frac{d}{d t}}W\right)-R^{\MS}(V,\gamma',\gamma',W)\right\}dt.$$    Moreover, since $T$ is totally skew-symmetric,
   \beq T^{\MS}\left(V,\gamma',\hat{\nabla}^{\MS}_{\frac{d}{d t}}W\right)=\frac{\pi}{L}\sin\left(\frac{\pi t}{L}\right)\cos\left(\frac{\pi t}{L}\right)T^\MS(J\gamma',\gamma', J\gamma')=0. \eeq  Therefore, we have
   \be  \frac{\p^2 E(\alpha)}{\p s_1\p s_2}\bigg|_{s_1=s_2=0}
   &=&\int_0^L\left(\frac{\pi^2}{L^2}\cos^2\left(\frac{\pi t}{L}\right)-\sin^2\left(\frac{\pi t}{L}\right)R^{\MS}(J\gamma',\gamma',\gamma',J\gamma')\right)dt\\
   &\leq&\int_0^L\left(\frac{\pi^2}{L^2}\cos^2\left(\frac{\pi t}{L}\right)-\sin^2\left(\frac{\pi t}{L}\right)K\right) dt\\&=&\int_0^L\cos^2\left(\frac{\pi t}{L}\right)\left(\frac{\pi^2}{L^2}-K\right) dt<0.
   \ee
   This is  a contradiction. Hence, the diameter of $M$ is bounded by $\pi/\sqrt{K}$.\\

   Moreover,
   suppose $\pi_1(M)$ is not trivial.
   Let $\gamma:[0, \ell]\to M$ be a unit speed closed geodesic representing a nontrivial element in $\pi_1(M)$  that has minimum length in the free homotopy class of $\gamma$.  Let $V(t)=J\gamma'(t)$ and define  a variation $\alpha:[0,\ell]\times(-\eps,\eps)\to M$ of $\gamma(t)$ as
   $$\alpha(t,s)=\exp_{\gamma(t)}(sV(t)).$$ Recall that geodesics for $\nabla^{\MLC}$ and $\nabla^\MS$ are the same and so  exponential maps of them are the same.
    It is easy to see that $\alpha$ is a smooth   variation of $\gamma$ by loops in the free homotopy class, i.e.
   \beq \alpha(t,0)=\gamma(t),\ \ \alpha(0,s)=\alpha(\ell, s),\ \ \forall s\in(-\eps,\eps).\label{loop} \eeq  Hence,
   \beq \frac{\p^2 E(\alpha)}{\p s^2}\bigg|_{s=0}\geq 0. \eeq
   By  using \eqref{2nd} in Theorem \ref{thm-2nd}, we have
   \beq \frac{\p^2 E(\alpha)}{\p s^2}\bigg|_{s=0} =-\int_0^\ell R^{\MS}(J\gamma',\gamma',\gamma',J\gamma')dt
   + \left.\LL\left.\left(\bar{\nabla}^\MS_{\frac{\p}{\p s}}\alpha_*\left(\frac{\p}{\p s}\right)\right)\right|_{s=0},\gamma'\RL\right|_{t=0}^{t=\ell}. \eeq
   By \eqref{loop}, the second term on the right hand side is zero. Hence
   $
   \frac{\p^2 E(\alpha)}{\p s^2}\bigg|_{s=0}<0
   $ and this is
   a contradiction.
   \eproof

     \noindent \bproof[Proof of  Theorem \ref{cor-auto}]    Let $F:(M,g)\>(M,g)$ be an isometry. Suppose to the contrary that $F$  has no fixed point. Then the displacement function  $\delta_F(x)=d(x,
     F(x))$ has a  minimum $\ell>0$. Suppose $\delta_F$ takes its minimum at point $p\in M$. Let $\gamma:[0,\ell]\>M$ be a minimal  unit speed geodesic connecting $p$ and $q=F(p)$. By a simple computation (e.g. \cite[Lemma~6.2.7]{Pet16}),  $\gamma$ extends to an axis of $F$, i.e. $\gamma:\R\>M$ satisfies
   \beq F(\gamma(t))=\gamma(t+\ell).\eeq 
     In particular, $\gamma'(\ell)=F_{*}\gamma'(0)$.\\

    We employ a setup similar to that in the previous paragraph.  Let $V(t)=J\gamma'(t)$ and define  a variation $\alpha:[0,\ell]\times(-\eps,\eps)\to M$ of $\gamma(t)$ by
     $$\alpha(t,s)=\exp_{\gamma(t)}(sV(t)).$$
     A straightforward computation shows
     \beq V(\ell)=J\gamma'(\ell)=J\circ F_{*p}(\gamma'(0))=F_{*p}(J\gamma'(0))=F_{*p}(V(0)). \eeq 
    Since  the geodesic is unique  with respect to the given initial values, we deduce that \beq \alpha(\ell,s)=F\circ \alpha(0,s). \eeq  By  using \eqref{2nd} in Theorem \ref{thm-2nd} again, the second variation of the
     energy of $\gamma(t)$ is
     \begin{eqnarray} \left. \frac{\p^2}{\p s^2}\right|_{s=0}E(\alpha(\bullet, s))\nonumber
     &=&\int_0^\ell\left\{\LL\hat{\nabla}^{\MS}_{\frac{d}{d t}}V,\hat{\nabla}^{\MS}_{\frac{d}{d t}}V\RL+T^{\MS}\left(V,\gamma',\hat{\nabla}^{\MS}_{\frac{d}{d t}}V\right) dt-R^{\MS}(V,\gamma',\gamma',V)\right\}dt\\
     &&+\left.\LL\left.\left(\bar{\nabla}^\MS_{\frac{\p}{\p s}}\alpha_*\left(\frac{\p}{\p s}\right)\right)\right|_{s=0},\gamma'\RL\right|_{t=0}^{t=\ell}    \\
     &=&-\int_0^\ell R^\MS(V,\gamma',\gamma', V) dt+\left.\LL\left.\left(\bar{\nabla}^\MS_{\frac{\p}{\p s}}\alpha_*\left(\frac{\p}{\p s}\right)\right)\right|_{s=0},\gamma'\RL\right|_{t=0}^{t=\ell}.\end{eqnarray}

 \noindent     If we set $c(s)=\alpha(0,s)$ and $\tilde c(s)=F(c(s))$, then
     \be \LL \left(\left. \bar{\nabla}^\MS_{\frac{\p}{\p s}} \alpha_*\left(\frac{\p}{\p s}\right)\right)\right|_{s=0,t=\ell},\gamma'(\ell) \RL&=& \LL \left(\left. \bar{\nabla}^\MS_{\frac{\p}{\p s}}\tilde c'(s)\right)\right|_{s=0},\gamma'(\ell) \RL\\
     &=&\LL F_*\left(\left(\left.\bar{\nabla}^\MS_{\frac{\p}{\p s}} c'(s)\right)\right|_{s=0}\right), F_*(\gamma'(0))\RL\\
     &=&\LL \left(\left.\bar{\nabla}^\MS_{\frac{\p}{\p s}} \alpha_*\left(\frac{\p}{\p s}\right)\right)\right|_{s=0,t=0},\gamma'(0) \RL
     \ee
     where the second identity follows form the fact that $F$ is an isometry. Hence,
     \beq \left. \frac{\p^2}{\p s^2}\right|_{s=0}E(\alpha(\bullet, s))\nonumber=-\int_0^\ell R^\MS(V,\gamma',\gamma', V) dt=-\int_0^\ell R^{\MS}(J\gamma',\gamma',\gamma',J\gamma')dt<0.\eeq
     This is a contradiction since $\gamma$ is a minimum geodesic connecting $p$ and $F(p)$.
   \eproof

\vskip 1\baselineskip


\end{document}